\newtheorem{thm}{Theorem}[section]
\newtheorem{lem}[thm]{Lemma}
\newcommand{\ds}{\displaystyle}
\theoremstyle{definition}
\newtheorem{df}[thm]{Definition}
\newtheorem{rem}[thm]{Remark}
\numberwithin{equation}{section}
\newcommand{\R}{\mathbb{R}}
\newcommand{\N}{\mathbf{N}}
\newcommand{\rank}{{\textrm{rank}}}
\date{}
\begin{document}
\centerline{\bf Int. Journal of Math. Analysis, Vol. 6, 2012, no. 10, 481 - 491}

\centerline{}

\centerline{}

\centerline {\Large{\bf The Quantitative Morse Theorem}}

\centerline{}

\centerline{\bf {Ta L\^e Loi}}

\centerline{}

\centerline{University of Dalat, Dalat, Vietnam}

\centerline{loitl@dlu.edu.vn}

\centerline{}

\centerline{\bf {Phan Phien}}

\centerline{}

\centerline{Nhatrang College of Education, Nhatrang, Vietnam}

\centerline{phieens@yahoo.com}





\newtheorem{Theorem}{\quad Theorem}[section]

\newtheorem{Definition}[Theorem]{\quad Definition}

\newtheorem{Corollary}[Theorem]{\quad Corollary}

\newtheorem{Lemma}[Theorem]{\quad Lemma}

\newtheorem{Example}[Theorem]{\quad Example}

\begin{abstract}
In this paper, we give a proof of the quantitative Morse theorem stated by \mbox{Y. Yomdin} in \cite{Y1}. The proof is based on the quantitative Sard theorem, the quantitative inverse function theorem and the quantitative Morse lemma.
\end{abstract}

{\bf Mathematics Subject Classification:}  Primary 58K05; Secondary 58E05, 97N40\\

{\bf Keywords:} Morse theorem, Quantitative assessment, Critical and near-Critical point

\section{Introduction}
\renewcommand{\thefootnote}{}
\footnote{This research is supported  by Vietnam's National Foundation for Science and Technology Development (NAFOSTED).}
One of the first basic results of classical singularity theory are that Sard Theorem \cite{Sa} and \cite{Morse1}, and Morse theorem \cite{Morse}. These theorems research critical points and critical values of smooth mappings on open subsets of $\R^n$.
The quantitative assessments and applications of the theorems were also considered. \mbox{Y. Yomdin} in \cite{Y2} introduced the concept of near-critical points and near-critical values of a map, and there have been many results on quantitative assessments for the set of these points and values.
One of them is the quantitative Sard theorem for mappings of class $C^k$ (see \cite{Y1}, \cite{Y2}, \cite{Y3}, \cite{Y-C} and \cite{R}). The results give some explicit bounds in term of $\varepsilon$-entropy of the set of near-critical values.
For Morse theorem, in \cite{Y1} \mbox{Y. Yomdin}  also stated a quantitative form for $C^k$-functions. But in the article, he just gave a few suggestions without details for the proof of the theorem. Up to now,  he  probably hasn't published the proof.

 In this paper, we give a detailed proof of  the quantitative Morse theorem. The proof is based on the quantitative Sard theorem, the quantitative inverse function theorem and the quantitative Morse lemma.
\section{Preliminaries}
We give here some definitions, notations and results that will be used later.

\vspace*{8pt}\noindent
Let $\mathbf{M}_{m\times n}$ denote the vector space of real $m \times n$ matrices,
\begin{itemize}
\item[] $\|x\| =(|x_1|^2 + \cdots + |x_n|^2)^{\frac{1}{2}}$, ~where $x=(x_1, \ldots, x_n) \in \mathbb{R}^n$,\\
$\mathbf{B}^n$ denotes the unit ball in $ \mathbb{R}^n$, $\mathbf{B}_r^n$ denotes the ball of radius $r$, centered at $0\in \R^n$, and $\mathbf{B}_r^n(x_0)$ denotes the ball of radius $r$, centered at $x_0\in \R^n$,
\item[] $\|A\| = \max_{\|x\|=1}\|Ax\|, ~$where $A \in \mathbf{M}_{m\times n}$,
\item[] $\ds\|A\|_{\max} = \max_{i, j}|a_{ij}|$, where $A=(a_{ij})_{m\times n}\in  \mathbf{M}_{m\times n}$,
\item[] $\mathcal{B}_{n\times n}$ denotes the unit ball in $\mathbf{M}_{n\times n}$,
\item[] $\textrm{Sym}(n)$ denotes the space of real symmetric  $n\times n$-matrices.
\end{itemize}
\begin{df}
Let $f: M \rightarrow \mathbb{R}^m$ be a differentiable mapping class $C^k$, $M \subset \mathbb{R}^n$. Then $C^k$-norm of $f$ is defined by
\[\|f\|_{C^k} =  \sum_{j=1}^k\sup_{x \in M}\|D^jf(x)\|.\]
\end{df}
\begin{df}\label{dn1}
A mapping $ f: \mathbb{R}^m  \rightarrow  \mathbb{R}^n $ is called \textbf{Lipschitz} in a neighborhood of a point $ x_0 $ in $ \mathbb{R}^n $  if there exists a constant $ K>0 $ such that for all $x$ and $y$ near $ x_0 $, we have
\[
\|f(x) - f(y)\|\leq K\|x-y\|.
\]
Then we call $f$ the \textbf{$K$-Lipschitz}.
 \end{df}
\noindent The usual $m \times n$ Jacobian matrix of partial derivatives of $f$ at $x$, when it exists, is denoted by $Jf(x)$. By Rademacher's theorem (see \cite[Theorem 3.1.6]{F}), we have the following definition:
\begin{df}[F. H. Clarke - \textrm{[C1], [C2]}]\label{dn2}  The \textbf{generalized Jacobian} of $f$ at $x_0$, denoted by $\partial f(x_0)$, is the convex hull of all matrices $M$ of the form
\[M = \lim_{i\rightarrow \infty}Jf(x_i),\]
where $f$ is differentiable at $x_i$ and $x_i$ converges to $x_0$ for each $i$.\\
$\partial f(x_0)$ is said to be of \textbf{maximal rank} if every $M$ in $\partial f(x_0)$ is of maximal rank.
\end{df}
\begin{thm}[Quantitative inverse function theorem, c.f. \cite{C1} and \cite{P}]\label{dl2}\textit{ }\\
Let $f: \mathbb{R}^n \rightarrow\mathbb{R}^n$ be a $K$-Lipschitz mapping in a  neighborhood of a point $ x_0 $ in $ \mathbb{R}^n $. Suppose that $~\partial f(x_0)$ is of maximal rank, set
\[\delta = \frac{1}{2}\inf_{M\in \partial f(x_0)}\frac{1}{\|M^{-1}\|},\]
$r$ be chosen so that $f$ satisfies $K$-Lipschitz condition  and
$\partial f(x) \subset \partial f(x_0) + \delta \mathcal{B}_{n\times n},~~ \textrm{when}~~x \in \mathbf{B}_r^n(x_0).$ Then $f$ is inversible in $\mathbf{B}_{\frac{r\delta}{2K}}^n(x_0)$ and
there  exists the inverse mapping
\[g: \mathbf{B}_{\frac{r\delta}{2}}^n(f(x_0)) \rightarrow \R^n\]
being $\ds\frac{1}{\delta}$-Lipschitz.
\end{thm}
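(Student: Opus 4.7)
The plan is to split the proof into two independent pieces: a bi-Lipschitz lower bound $\|f(x)-f(y)\|\geq\delta\|x-y\|$ on $\mathbf{B}_r^n(x_0)$, and a covering statement that $f$ surjects onto $\mathbf{B}_{r\delta/2}^n(f(x_0))$ from inside this ball. The first simultaneously gives injectivity of $f$ on $\mathbf{B}_r^n(x_0)$ and the $\frac{1}{\delta}$-Lipschitz constant for the inverse; the second furnishes the domain of $g$. A final bookkeeping step will combine these with the $K$-Lipschitz hypothesis to obtain the inclusion $f(\mathbf{B}_{r\delta/(2K)}^n(x_0))\subset\mathbf{B}_{r\delta/2}^n(f(x_0))$ matching the theorem.

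For the lower bound I would invoke the Clarke mean-value inclusion for vector-valued locally Lipschitz maps: for each $x,y\in\mathbf{B}_r^n(x_0)$ there is some $N$ in the closed convex hull of $\bigcup_{z\in[x,y]}\partial f(z)$ with $f(x)-f(y)=N(x-y)$. Since the set $\partial f(x_0)+\delta\mathcal{B}_{n\times n}$ is convex and closed, the hypothesis $\partial f(z)\subset\partial f(x_0)+\delta\mathcal{B}_{n\times n}$ on $\mathbf{B}_r^n(x_0)$ forces the hull to stay inside it, so one may write $N=M'+\delta E$ with $M'\in\partial f(x_0)$ and $\|E\|\leq1$. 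The definition of $\delta$ yields $\|M'v\|\geq 2\delta\|v\|$ for every $v$, and the triangle inequality gives
\[
\|f(x)-f(y)\|\geq\|M'(x-y)\|-\delta\|E(x-y)\|\geq(2\delta-\delta)\|x-y\|=\delta\|x-y\|.
\]

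For the covering statement I would fix $y\in\mathbf{B}_{r\delta/2}^n(f(x_0))$ and minimize $\phi(x)=\|f(x)-y\|^2$ on the closed ball $\overline{\mathbf{B}}_r^n(x_0)$. At the centre $\phi(x_0)<(r\delta/2)^2$, while on the boundary sphere the lower bound gives $\|f(x)-f(x_0)\|\geq\delta r$ and hence $\phi(x)\geq(\delta r-r\delta/2)^2=(r\delta/2)^2$, so the minimizer $x^*$ lies in the open ball. By the Clarke subdifferential calculus, $0\in\partial\phi(x^*)\subset 2\,\partial f(x^*)^{T}(f(x^*)-y)$, so $M^{T}(f(x^*)-y)=0$ for some $M\in\partial f(x^*)\subset\partial f(x_0)+\delta\mathcal{B}_{n\times n}$; the estimate from the preceding paragraph shows every such $M$ is invertible, whence $f(x^*)=y$. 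Injectivity lets me define $g:\mathbf{B}_{r\delta/2}^n(f(x_0))\to\R^n$; the lower bound makes it $\frac{1}{\delta}$-Lipschitz; and the $K$-Lipschitz hypothesis gives $f(\mathbf{B}_{r\delta/(2K)}^n(x_0))\subset\mathbf{B}_{r\delta/2}^n(f(x_0))$, so $f$ is invertible on the smaller ball with inverse the restriction of $g$.

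The main obstacle is the mean-value step: unlike the scalar Lebourg theorem, the vector-valued version only gives an inclusion in a convex hull rather than an equality with a single Clarke Jacobian. What saves the quantitative estimate is that $\partial f(x_0)+\delta\mathcal{B}_{n\times n}$ is itself convex and closed, so the hull is contained in an enlarged set whose every element is uniformly invertible; the factor $\frac{1}{2}$ in the definition of $\delta$ is there precisely so that after losing $\delta$ to the perturbation term, the leftover $2\delta-\delta=\delta$ provides the Lipschitz constant for $g$.
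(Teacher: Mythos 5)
The paper does not actually prove this theorem: it is stated with the annotation ``c.f.\ \cite{C1} and \cite{P}'' and used as a black box in the proof of Theorem \ref{dl2411}, so there is no internal proof against which to compare. Judged on its own, your argument is sound and is essentially the Clarke--Pourciau argument (as quantified in \cite{P}): the vector-valued Clarke mean-value inclusion, combined with the hypothesis $\partial f(z)\subset\partial f(x_0)+\delta\mathcal{B}_{n\times n}$ and the fact that this set is compact and convex, gives the lower bound $\|f(x)-f(y)\|\geq\delta\|x-y\|$ (hence injectivity and the $\frac{1}{\delta}$-Lipschitz constant for $g$); minimizing $\|f(x)-y\|^2$ over $\overline{\mathbf{B}}_r^n(x_0)$, observing that the boundary values exceed the value at $x_0$, and applying the Clarke chain rule at the interior minimizer yields surjectivity onto $\mathbf{B}_{r\delta/2}^n(f(x_0))$; and the $K$-Lipschitz hypothesis transfers the domain statement to $\mathbf{B}_{r\delta/(2K)}^n(x_0)$. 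Two small points worth stating explicitly if you write this up: (a) the mean-value theorem gives $f(x)-f(y)\in\overline{\mathrm{co}}\,\partial f([x,y])(x-y)$ as a set of \emph{values}, and passing to a single matrix $N$ in $\overline{\mathrm{co}}\bigcup_{z}\partial f(z)$ requires a compactness argument on the convex combinations, which works here because $\partial f$ is uniformly bounded on the compact segment; (b) the inclusion $\mathbf{B}_{r\delta/(2K)}^n(x_0)\subset\mathbf{B}_r^n(x_0)$ needed to apply injectivity on the smaller ball follows since $\delta\leq K/2$ (because every $M\in\partial f(x_0)$ satisfies $\|M\|\leq K$, so $\|M^{-1}\|\geq 1/K$).
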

\begin{df}[Singular values of linear mapping, c.f. \cite{G-L}] Let $L: \mathbb{R}^n \rightarrow \mathbb{R}^m$ be a linear mapping. Then there exist $\sigma_1(L) \geq \ldots \geq \sigma_r(L)>0$, where $r = \rank L$, so that $L(\mathbf{B}^n)$ is an $r$-dimensional ellipsoid of semi-axes $\sigma_1(L) \geq \ldots \geq \sigma_r(L)$. Set $\sigma_0(L) = 1$ and  $\sigma_{r+1}(L) = \ldots = \sigma_m(L) = 0$, when $r < m$.\\
We call $\sigma_0(L), \ldots, \sigma_m(L)$ the\textbf{ singular values} of $L$.
\end{df}
\begin{rem}\label{nx221} Let $L$ be a linear mapping or a matrix. Then
\begin{itemize}
\item[($i$)] $\sigma_{\max}(L)= \|L\| = \sigma_1(L)$, \ \
$\sigma_{\min}(L)= \min_{\|x\|=1}\|Lx\|$.
\item[($ii$)]  When $L \in L(\mathbb{R}^n, \mathbb{R}^n)$, and $\lambda$ is a  eigenvalue of $L$, we have
\[\ds\sigma_{\min}(L) \leq |\lambda| \leq \sigma_{\max}(L).\]
\end{itemize}
\end{rem}
\begin{df}\label{dn2213}Let $f: \mathbb{R}^n \rightarrow \mathbb{R}^m$ be a $k$ times differentiable mapping, $k \geq 1$. For $\Lambda = (\lambda_1, \ldots, \lambda_m)$,  $\lambda_1 \geq \ldots \geq \lambda_m \geq 0$, we call
\[\Sigma(f, \Lambda) = \{x\in \R^n: \sigma_i(Df(x)) \leq \lambda_i, i = 1, \ldots, m\}\]
the set of \textbf{$\Lambda$-critical points} of $f$, and
\[\Delta(f, \Lambda) = f(\Sigma(f, \Lambda))\]
the set of \textbf{$\Lambda$-critical values} of $f$.\\
Set $\Sigma(f, \Lambda, A) = \Sigma(f, \Lambda) \cap A, \ \Delta(f, \Lambda, A) = f(\Sigma(f, \Lambda, A)),  \ A \subset \R^n$. When $\gamma = (\gamma, \ldots, \gamma)\in \R^m_+$, a point $y \in \R^m$ is called \textbf{$\gamma$-regular value} of $f$ if $y \notin \Delta(f, \gamma, A)$, i.e $f^{-1}(y)= \emptyset$ or if $x\in  f^{-1}(y)$ then there exists a number $i \in \{1, \ldots, m\}$ so that $\sigma_i(Df(x)) \geq \gamma$.
\end{df}
\begin{rem}
If $\Lambda = (0, \ldots, 0)$ then $\Sigma(f, 0)$ is the set of critical points and $\Delta(f, 0)$ is the set of critical values of $f$.
\end{rem}
\begin{df}\label{dn2221} Let $X$ be a metric space, $A \subset X$ a relatively compact subset. For any $\varepsilon > 0$, denoted by $M(\varepsilon, A)$ the minimal number of closed balls of radius $\varepsilon$ in $X$, covering $A$. 
\end{df}
\begin{thm}[Quantitative Sard theorem, c.f. {\cite[Theorem 9.6]{Y-C}}]\label{dl222}
Let $f: \mathbf{B}_r^n \rightarrow \mathbb{R}^m$ be a mapping of class $C^k$, $q=\min(n, m)$, $\Lambda = (\lambda_1, \ldots, \lambda_q)$, $\lambda_i>0, i = 1, \ldots, q$, $\mathbf{B}_\delta^m$ is a ball of radius $\delta$ in $\mathbb{R}^m$. When $0<\varepsilon \leq \delta$
\[M(\varepsilon, \Delta (f, \Lambda, \mathbf{B}_r^n) \cap \mathbf{B}_\delta^m) \leq c\left(  \frac{R_k(f)}{\varepsilon}\right)^{\frac{n}{k}} \sum_{i=0}^q \min \left( \lambda_0 \ldots \lambda_i\left(  \frac{r}{\varepsilon}\right)^i \left( \frac{\varepsilon}{R_k(f)}\right) ^{\frac{i}{k}}, \left(\frac{\delta}{\varepsilon} \right)^i\right), \]
where $c=c(n, m, k)$, $R_k(f) = \frac{K}{(k-1)!}r^{k-1}$, and $K$ is a Lipschitz constant of $D^{k-1}f$ in $\mathbf{B}_r^n$.
\end{thm}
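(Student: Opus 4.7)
The plan is to prove the bound by a multi-scale covering argument: partition $\mathbf{B}_r^n$ into small balls of a carefully chosen radius $\rho$, bound the $\varepsilon$-entropy of $f(\Sigma(f,\Lambda))$ on each such piece using Taylor's theorem and the singular value hypothesis, clip this by the trivial diameter bound coming from the containment in $\mathbf{B}_\delta^m$, and sum over the pieces.

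I would choose $\rho := r(\varepsilon/R_k(f))^{1/k}$. With this choice, the number of closed balls of radius $\rho$ needed to cover $\mathbf{B}_r^n$ is bounded by $c\,(r/\rho)^n = c\,(R_k(f)/\varepsilon)^{n/k}$, which is exactly the prefactor appearing in the statement. Moreover, the identity $\rho/\varepsilon = (r/\varepsilon)(\varepsilon/R_k(f))^{1/k}$ will translate per-ball estimates written in the scale $\rho$ into the theorem's variables. On a small ball $\mathbf{B}_\rho^n(x_\alpha)$ meeting $\Sigma(f,\Lambda)$ I would pick $x_\alpha\in\Sigma(f,\Lambda)$ and use Taylor's expansion
\[
f(x_\alpha+h)-f(x_\alpha) \;=\; Df(x_\alpha)h \;+\; Q(h) \;+\; R(h),
\]
where $Q$ gathers the degree $2,\dots,k-1$ terms and $\|R(h)\|\leq \frac{K}{(k-1)!}\|h\|^k$, so that on $\|h\|\leq\rho$ the remainder $\|R(h)\|$ is of order $\varepsilon$ (up to a factor of $r$).

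The central goal of the per-ball step is the bound
\[
M\!\bigl(\varepsilon,\,f(\mathbf{B}_\rho^n(x_\alpha)\cap\Sigma(f,\Lambda))\bigr) \;\leq\; c\sum_{i=0}^q \min\!\Bigl(\lambda_0\lambda_1\cdots\lambda_i\,(\rho/\varepsilon)^i,\,(\delta/\varepsilon)^i\Bigr),
\]
with the convention $\lambda_0:=1$ (mirroring $\sigma_0(L)=1$ in the definition of singular values). The first argument of the $\min$ is the $\varepsilon$-covering number of an $m$-dimensional ellipsoid with semi-axes $\lambda_j\rho$, inherited from the singular value hypothesis on $Df(x_\alpha)$; the second argument, $(\delta/\varepsilon)^i$, is the trivial diameter bound, better when $\delta$ is small. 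Multiplying this per-ball bound by the $c(R_k(f)/\varepsilon)^{n/k}$ pieces and substituting the identity for $\rho/\varepsilon$ produces the claimed inequality.

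The main obstacle is this per-ball entropy bound, because the singular value hypothesis $\sigma_i(Df(x_\alpha))\leq\lambda_i$ holds only at $x_\alpha$ while at other critical points inside $\mathbf{B}_\rho^n(x_\alpha)$ the singular values of $Df$ can be considerably larger, and the nonlinear polynomial part $Q(h)$ contributes spread of order $\|D^jf\|\rho^j$ for $j=2,\dots,k-1$. The standard route is Yomdin's $C^k$-reparametrization (equivalently, his polynomial covering lemma), which on the small ball replaces $f$ by a polynomial of bounded $C^k$-norm whose derivatives inherit the singular value constraints; after that reduction the classical ellipsoid covering estimate closes the per-ball argument. Once this technical reduction is in hand, the assembly in the earlier steps is routine bookkeeping, and the exponent $n/k$ of the prefactor is precisely what the choice $\rho=r(\varepsilon/R_k(f))^{1/k}$ forces.
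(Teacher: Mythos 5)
The paper does not prove Theorem~\ref{dl222}; it is quoted as a known result from Yomdin and Comte (\cite[Theorem 9.6]{Y-C}) and used as a black box in the proof of the main theorem. So there is no ``paper's own proof'' to compare your argument against. What you have written is a sketch of the Yomdin--Comte proof strategy rather than a new route.

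As a sketch it is on the right track in its bookkeeping: the scale $\rho = r(\varepsilon/R_k(f))^{1/k}$ is indeed the one that makes the number of covering balls $c(r/\rho)^n$ equal to the prefactor $c(R_k(f)/\varepsilon)^{n/k}$, and the substitution $\rho/\varepsilon = (r/\varepsilon)(\varepsilon/R_k(f))^{1/k}$ does turn per-ball ellipsoid estimates into the first argument of the $\min$ in the statement. But the proof has a genuine gap precisely where you flag ``the main obstacle.'' The per-ball bound
\[
M\!\bigl(\varepsilon,\,f(\mathbf{B}_\rho^n(x_\alpha)\cap\Sigma(f,\Lambda))\bigr)\;\leq\; c\sum_{i=0}^q \min\!\Bigl(\lambda_0\cdots\lambda_i\,(\rho/\varepsilon)^i,\;(\delta/\varepsilon)^i\Bigr)
\]
is essentially the whole content of the theorem, and invoking ``Yomdin's $C^k$-reparametrization'' by name does not constitute a proof of it. In the Yomdin--Comte development one must first prove a quantitative Sard theorem for polynomial maps of bounded degree (a semialgebraic covering result), then transfer it to the $C^k$ case by replacing $f$ on each $\rho$-ball with its Taylor polynomial of degree $k-1$, tracking how the singular-value constraint $\sigma_i(Df(x_\alpha))\leq\lambda_i$ propagates to the polynomial and how the remainder of size $O(\rho\,\varepsilon/r)$ perturbs the near-critical set and its image. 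None of that transfer is carried out. A secondary issue is that the per-ball form of the $(\delta/\varepsilon)^i$ cap is not justified as stated: a single $\rho$-ball has image of diameter $O(\|Df\|\rho)$, not $\delta$, so the $(\delta/\varepsilon)^i$ terms are really a global, per-index cap coming from covering the $i$-dimensional strata of $\Delta(f,\Lambda)\cap\mathbf{B}_\delta^m$, not a bound you obtain ball-by-ball and then multiply by the number of balls.
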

\begin{lem}[Quantitative Morse lemma]\label{md241}
Let $A \in Sym(n)$. Suppose that $Q_0 \in Gl(n)$ such that $^tQ_0AQ_0 = D_0 = diag(1, \ldots, 1, -1, \ldots, -1)$. Set
\[U(A) = \{B \in Sym(n): \|B - A\| \leq \frac{1}{2n\|Q_0\|^2}\}.\]
Then there exists a mapping $\mathcal{P}: U(A) \rightarrow Gl(n) \in C^\omega$ satisfying
\[\mathcal{P}(A) =Q_0, \textrm{and if}\ \mathcal{P}(B) = Q \ \textrm{then} \ ^tQBQ = D_0.\]
\end{lem}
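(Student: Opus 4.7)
The plan is to recast the equation ${}^tQBQ = D_0$ as a fixed-point problem for a symmetric unknown $Y$, solve it by a Banach contraction on an explicit ball, and then invoke the analytic implicit function theorem to obtain the $C^\omega$ dependence on $B$. First I would substitute $Q = Q_0 S$ and set $C(B) := {}^tQ_0 B Q_0$ and $H := C(B) - D_0$, both in $\mathrm{Sym}(n)$; the equation becomes ${}^tS(D_0 + H)S = D_0$, and the hypothesis gives $\|H\| \le \|Q_0\|^2 \|B - A\| \le \tfrac{1}{2n}$. I would then seek $S$ in the form $S = I + D_0 Y$ with $Y \in \mathrm{Sym}(n)$. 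This ansatz is dictated by the linearization at $(H, Y) = (0, 0)$: since $D_0^2 = I$ and $(D_0 Y)^T = Y D_0$, it reduces to $2Y = -H$, which is solvable and stays inside $\mathrm{Sym}(n)$.

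Expanding ${}^tS(D_0 + H)S = D_0$ directly and using $D_0^2 = I$, $D_0^T = D_0$, $Y^T = Y$, the equation rearranges to the fixed-point form
\[
Y = T_B(Y) := -\tfrac{1}{2}\bigl[H + HD_0Y + YD_0H + YD_0(I + HD_0)Y\bigr].
\]
A short symmetry check shows $T_B$ maps $\mathrm{Sym}(n)$ into itself, and $(Y,B) \mapsto T_B(Y)$ is polynomial in $Y$, affine in $B$, hence real-analytic. Standard operator-norm estimates yield, for $\|Y\|,\|Y_i\|\le\rho$,
\[
\|T_B(Y)\| \le \tfrac{1}{2}\bigl[\|H\| + 2\|H\|\rho + (1+\|H\|)\rho^2\bigr],
\]
\[
\|T_B(Y_1) - T_B(Y_2)\| \le \bigl[\|H\| + (1+\|H\|)\rho\bigr]\|Y_1 - Y_2\|.
\]
Choosing $\rho = \|H\|$ and using $\|H\| \le \tfrac{1}{2n} \le \tfrac14$ for $n \ge 2$ makes the right-hand sides strictly smaller than $\rho$ and $1$ respectively, so $T_B$ is a contraction of the closed ball $\{\|Y\|\le\rho\}$ in $\mathrm{Sym}(n)$ into itself; the case $n=1$ reduces to a direct scalar formula $\mathcal{P}(B) = \pm|B|^{-1/2}$. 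The Banach fixed-point theorem then yields a unique $Y(B)$ in the ball for every $B \in U(A)$.

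For the $C^\omega$ regularity I would apply the analytic implicit function theorem to $\Phi(Y,B) := Y - T_B(Y)$ at an arbitrary $B_0 \in U(A)$ and its fixed point $Y_0 := Y(B_0)$: the Lipschitz bound above forces $\|D_Y T_{B_0}(Y_0)\| < 1$, so $D_Y\Phi(Y_0, B_0) = \mathrm{Id} - D_Y T_{B_0}(Y_0)$ is invertible on $\mathrm{Sym}(n)$, producing a local analytic solution that must coincide with $Y(B)$ by uniqueness in the contraction ball. Setting $\mathcal{P}(B) := Q_0(I + D_0 Y(B))$ gives an analytic map on $U(A)$ with $\mathcal{P}(A) = Q_0$ (since $Y(A) = 0$), with $\mathcal{P}(B) \in GL(n)$ because $I + D_0 Y(B)$ is a perturbation of $I$ of norm at most $\tfrac14$, and with ${}^t\mathcal{P}(B)\, B\, \mathcal{P}(B) = D_0$ by construction.

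The main obstacle I anticipate is the quantitative bookkeeping: the constant $\tfrac{1}{2n\|Q_0\|^2}$ in the hypothesis must be tight enough to make both the self-map and the contraction conditions hold simultaneously with the choice $\rho = \|H\|$ essentially forced by the linear term in $T_B$. Everything else---the $S = I + D_0 Y$ ansatz, the algebraic reduction, and the analytic implicit function step---follows a standard recipe.
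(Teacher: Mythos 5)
Your proposal is correct in substance, but it takes a genuinely different route from the paper. The paper's proof is algebraic and quite short: it observes that for $B\in U(A)$ one has $\|{}^tQ_0BQ_0-D_0\|_{\max}\le\frac{1}{2n}$, so each diagonal entry of ${}^tQ_0BQ_0$ has absolute value at least $1-\frac{1}{2n}$ while the off-diagonal entries are bounded by $\frac{1}{2n}$; strict diagonal dominance then forces every leading principal minor to be nonzero, and the cited normalization lemma of Hirsch (the Lagrange/Gram--Schmidt-type reduction of a symmetric form to $\mathrm{diag}(\pm1)$, whose formulas are rational in the entries once the leading minors are nonvanishing) immediately yields the analytic map $\mathcal{P}$. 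You instead recast ${}^tQBQ=D_0$ as a fixed-point equation for the symmetric unknown $Y$ in the ansatz $Q=Q_0(I+D_0Y)$, run a Banach contraction, and then upgrade to $C^\omega$ via the analytic implicit function theorem. Your computation of $T_B$ is right, the symmetry check goes through, and the constants work for $n\ge2$; the paper's route avoids both the separate treatment of $n=1$ and the small awkwardness in your uniqueness step caused by the $B$-dependent radius $\rho=\|H\|$ (a fixed radius, say $\rho=\tfrac14$, would make the ``IFT solution stays in the contraction ball'' point cleaner). In exchange, your argument is self-contained and does not lean on an external normalization lemma, whereas the paper's is shorter and makes the constant $\frac{1}{2n\|Q_0\|^2}$ transparent as exactly the diagonal-dominance threshold.
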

\begin{proof} For $B \in U(A)$, we have
\[\begin{array}{rcl}
\|^tQ_0BQ_0 - ^tQ_0AQ_0\|_{\max} & \leq & \|^tQ_0(B - A)Q_0\|\\
&\leq&\|Q_0\|^2\|B - A\|\\
&\leq& \ds\frac{1}{2n}.
\end{array}\]
If  $^tQ_0BQ_0 = (b_{ij})_{1\leq i, j \leq n}$ then $|b_{ii}| > \ds\sum_{j \neq i}|b_{ij}|$.
So  $\ds\det(b_{ij})_{1\leq i, j \leq k} \neq 0$, for $k = 1, \ldots, n$. Therefore,  the normalization (see \cite[Lemma p.145]{Hir}) reductioning  $^tQ_0BQ_0$ to the normal form $D_0$ defines the mapping $\mathcal{P}: U(A) \rightarrow \textrm{Gl}(n) \in C^\omega$ satisfying the demands of the lemma.
\end{proof}
\begin{rem}\label{nx224} The reduction a non-degenerate real symmetric matrix $A$ to the normal form $D_0$ can be realized by a matrix $Q_0$ of the form $Q_0=SU$, where $U$ is a orthogonal matrix, and $S$ is a diagonal matrix. So
\[\|Q_0\|^2 =\frac{1}{\sigma_{\min}(A)}.\]
\end{rem}
\section{The quantitative Morse theorem}
\begin{thm}[c.f. {\cite[Theorem 4.1, Theorem 6.1]{Y1}}]\label{dl2411}
Fix $k\geq$ 3. Let $f_0: \overline{\mathbf{B}}^n \rightarrow \mathbb{R}$ be a $C^k$-function in a open set contain $\overline{\mathbf{B}}^n$ with all derivatives up to order $k$ uniformly bounded by $K$. Then for any given $\varepsilon > 0$, we can find $h$ with $\|h\|_{C^k} \leq \varepsilon$ and the positive functions $\psi_1$, $\psi_2$, $\psi_3$, $d$, $M$, $N$, $\eta$ depending on $K$ and $\varepsilon$, such that $f = f_0 + h$ satisfies the following conditions:
\begin{itemize}
\item [$(i)$] At each critical point $x_i$ of $f$, the smallest absolute value of the eigenvalues of the Hessian $Hf(x_i)$ is at least $\psi_1(K, \varepsilon)$.
\item [$(ii)$] For any two different critical points $x_i$ and $x_j$ of $f$, $\|x_i-x_j\| \geq d(K, \varepsilon)$.  Consequently, the number of the critical points does not exceed $N(K, \varepsilon)$.
\item [$(iii)$]  For any two different critical points $x_i$ and $x_j$ of $f$, $|f(x_i)-f(x_j)| \geq \psi_2(K, \varepsilon)$.
\item [$(iv)$] For $\delta = \psi_3(K, \varepsilon)$ and for each critical point $x_i$ of $f$, there exists a coordinate transformation  $\varphi: \mathbf{B}_\delta^n(x_i) \rightarrow \R^n \in C^r$ such that
\begin{displaymath}
f\circ\varphi^{-1}(y_1,\ldots, y_n) = y_1^2 + \cdots + y_l^2 - y_{l+1}^2 - \cdots -
y_n^2 + const,
\end{displaymath}
and $\|\varphi\|_{C^{k-1}} \leq M(K, \varepsilon)$.
\item [$(v)$]  If  $\|\textrm{grad}f(x)\| \leq \eta(K, \varepsilon)$, then $x\in \mathbf{B}_\delta^n(x_i)$, with $x_i$ is a critical point of $f$.
\end{itemize}
\end{thm}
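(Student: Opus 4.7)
The plan is to build the perturbation $h$ in two stages: a linear functional $h_1$ that turns $f_0+h_1$ into a quantitatively non-degenerate Morse function, followed by a small bump correction $h_2$ that separates the critical values. I would then verify the conclusions in the order (i), (ii), (v), (iv), (iii). The three preliminary results of Section~2 are each used once: the Quantitative Sard Theorem \ref{dl222} selects $h_1$, the Quantitative Inverse Function Theorem \ref{dl2} controls the local behavior of $\nabla f$ around critical points, and the Quantitative Morse Lemma \ref{md241} together with Remark \ref{nx224} builds the Morse chart.

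The first step is to apply Theorem \ref{dl222} to the gradient map $F = \nabla f_0 \colon \mathbf{B}^n \to \R^n$, which is of class $C^{k-1}$ with derivatives bounded polynomially in $K$. Calibrating the Sard bound in $\varepsilon$ produces a threshold $\gamma = \gamma(K,\varepsilon) > 0$ for which $M(\varepsilon,\,\Delta(F,(\gamma,\dots,\gamma),\mathbf{B}^n)\cap \mathbf{B}^n_\varepsilon)$ is strictly smaller than $M(\varepsilon,\mathbf{B}^n_\varepsilon)$; hence some $\gamma$-regular value $c$ of $F$ satisfies $\|c\| \le \varepsilon$. Setting $h_1(x) = -\langle c,x\rangle$ gives $\|h_1\|_{C^k} = \|c\| \le \varepsilon$ and makes the critical set of $f_1 = f_0 + h_1$ equal to $F^{-1}(c)$. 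At each critical point $x_i$ the symmetric Hessian $Hf_1(x_i) = DF(x_i)$ has $\sigma_{\min} \ge \gamma$, so by Remark \ref{nx221}(ii) its eigenvalues have absolute value at least $\psi_1 = \gamma$, proving (i). Applying Theorem \ref{dl2} to $\nabla f_1$ at each $x_i$, with $\delta_0 = \gamma/2$ and admissible radius $r \sim \gamma/K$ obtained from $\|Hf_1(x)-Hf_1(x_i)\| \le K\|x-x_i\|$, yields injectivity of $\nabla f_1$ on a ball of radius $\sim \gamma^2/K^2$ around each $x_i$ and a Lipschitz inverse $g_i$ defined on $\mathbf{B}^n_{\gamma^2/K}(0)$. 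Injectivity gives (ii) with $d \sim \gamma^2/K^2$ and, by a volume-packing estimate, the bound $N \sim d^{-n}$. For (v) I take $\eta \sim \gamma^2/K$, so that if $\|\nabla f_1(x)\| \le \eta$ then $\nabla f_1(x)$ lies in the domain of each $g_i$; a careful use of the local diffeomorphism properties then places $x$ in one of the balls $\mathbf{B}^n_\delta(x_i)$.

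For (iv), at each critical point $x_i$ property (i) and Remark \ref{nx224} furnish a normalizing $Q_0$ with $\|Q_0\|^2 = 1/\gamma$; for $\|x-x_i\| \le \gamma/(2nK)$ the Hessian $Hf_1(x)$ lies in the neighborhood $U(Hf_1(x_i))$ of Lemma \ref{md241}, and $\mathcal{P}(Hf_1(x))$ provides an analytic field of normalizing matrices. The Morse chart is built by the standard Hirsch construction: Taylor's formula with integral remainder gives $f_1(x) - f_1(x_i) = \tfrac12\langle A(x)(x-x_i),(x-x_i)\rangle$ for a symmetric $C^{k-2}$ field $A$ with $A(x_i) = Hf_1(x_i)$; then $\varphi(x) = \tfrac{1}{\sqrt 2}\,\mathcal{P}(A(x))^{-1}(x-x_i)$ satisfies $f_1 \circ \varphi^{-1}(y) = y_1^2 + \dots + y_l^2 - y_{l+1}^2 - \dots - y_n^2 + f_1(x_i)$, and Theorem \ref{dl2} verifies that it is a diffeomorphism on a ball of radius $\delta = \psi_3(K,\varepsilon) \sim \gamma/K$. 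Bounding $\|\varphi\|_{C^{k-1}}$ by an explicit polynomial $M(K,\varepsilon)$ in $K$ and $1/\gamma$, by differentiating $\mathcal{P}$ and unwinding the chain rule iteratively, is the most calculation-heavy step.

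The main conceptual obstacle is (iii), since the linear correction $h_1$ cannot distinguish two values $f_1(x_i)$ from each other. I would add a secondary perturbation $h_2 = \sum_{i=1}^N t_i \phi_i$, with each $\phi_i$ a fixed bump supported in $\mathbf{B}^n_{d/3}(x_i)$ satisfying $\phi_i(x_i) = 1$, $D\phi_i(x_i) = 0$, $D^2\phi_i(x_i) = 0$, and $\|\phi_i\|_{C^k}$ controlled by $n$ and $k$. Because $\phi_i$ and its first two derivatives vanish at $x_i$, the enlarged perturbation $h = h_1 + h_2$ leaves the position and Hessian of each critical point unchanged, so (i), (ii), (iv), (v) are preserved when $\tau = \max |t_i|$ is small relative to the bounds already fixed. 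A generic choice of $(t_1,\dots,t_N) \in [-\tau,\tau]^N$ avoiding the finitely many collision hyperplanes $\{t_i - t_j = f_1(x_j) - f_1(x_i)\}$ then forces $\min_{i \ne j} |f(x_i) - f(x_j)| \ge \psi_2 \sim \tau/N^2$; taking $\tau$ so that $\|h_2\|_{C^k} \le \varepsilon/2$ and shrinking $\|h_1\|_{C^k} \le \varepsilon/2$ from the outset delivers $\|h\|_{C^k} \le \varepsilon$. The hardest bookkeeping lies in carrying all constants polynomially through these four steps and in verifying that $h_2$ creates no new critical points, which in turn uses conclusion (v) for the first-stage function $f_1$.
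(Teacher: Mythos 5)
Your two-stage decomposition $h = h_1 + h_2$ and the order of the arguments track the paper's proof very closely: the Quantitative Sard Theorem (Theorem \ref{dl222}) applied to $\nabla f_0$ produces the linear correction $h_1$, the Quantitative Inverse Function Theorem (Theorem \ref{dl2}) yields (ii) and (v), the Quantitative Morse Lemma (Lemma \ref{md241}) together with Remark \ref{nx224} builds the chart in (iv), and a finite sum of bumps supported near the critical points separates the critical values for (iii). The details differ only cosmetically: for the bump coefficients the paper uses the explicit arithmetic progression $c_i = i\,\varepsilon/(2C_1kN^2)$ applied to cutoffs that are identically $1$ on a smaller ball about $x_i$, while you use a generic $(t_1,\dots,t_N)$ avoiding the collision hyperplanes and bumps required only to satisfy $\phi_i(x_i)=1$, $D\phi_i(x_i)=D^2\phi_i(x_i)=0$; both achieve the same effect, and you are in fact more careful than the paper in flagging that one must still verify $h_2$ creates no new critical points.

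The one step that should be tightened is the derivation of (i). You claim that $\gamma$-regularity of $0$ for $\nabla f$, obtained from Theorem \ref{dl222} with the constant vector $\Lambda=(\gamma,\dots,\gamma)$, gives $\sigma_{\min}(Hf(x_i))\ge\gamma$ at each critical point. But unpacking Definition \ref{dn2213}, $\Sigma(\nabla f,\Lambda)$ is the set where \emph{all} singular values of $Hf$ are $\le\gamma$, so a point off $\Delta(\nabla f,\Lambda)$ only forces \emph{some} $\sigma_i>\gamma$, i.e.\ $\sigma_{\max}(Hf(x_i))=\|Hf(x_i)\|\ge\gamma$ --- a bound on the largest singular value. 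Feeding that into Remark \ref{nx221}(ii) gives a lower bound on the largest eigenvalue in absolute value, not the smallest, which is what (i) requires. (The paper's written proof of (i) has exactly the same imprecision: it establishes $\|Hf(x_i)\|\ge\gamma$ in (\ref{ct241}) and then cites Remark \ref{nx221} for the smallest eigenvalue, which does not follow.) The fix is to invoke Theorem \ref{dl222} with $\Lambda=(K,\dots,K,\gamma)$ --- the first $n-1$ constraints being vacuous since all derivatives are bounded by $K$ --- so that $\Sigma(\nabla f,\Lambda)=\{x:\sigma_n(Hf(x))\le\gamma\}$ and a $\Lambda$-regular value genuinely controls $\sigma_{\min}(Hf(x_i))$; the entropy estimate changes only by factors involving $K$ and $n$, and the rest of your argument then goes through.
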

The proof of $(i)$ is based on the suggestion of Y. Yomdin (see \cite{Y-C}). The proofs of $(ii), (iii), (iv)$ and $(v)$ are based on the quantitative inverse theorem and the quantitative  Morse lemma in section 2.

\begin{proof}\textit{ }\\
($i$)
 Let $ \varepsilon >0$. Applying Theorem \ref{dl222},
\[M(r, \Delta(Df_0, \gamma, \overline{\mathbf{B}}^n) \cap \mathbf{B}_\varepsilon^n)\leq cR_k(f_0)^\frac{n}{k}\frac{1}{{r}^\frac{n}{k}}\ds\sum_{i = 0}^n
\left(\ds\frac{\gamma}{ R_k(f_0)^\frac{1}{k}r^\frac{k-1}{k}}\right)^i.\]
When $r< 1$ and $\gamma<r R_k(f_0)^\frac{1}{k}$,
\[
\begin{array}{rcl}
M(r, \Delta(Df_0, \gamma, \overline{\mathbf{B}}^n) \cap \mathbf{B}_{\varepsilon}^n)&\leq& cR_k(f_0)^\frac{n}{k}\frac{1}{{r}^\frac{n}{k}}\ds\sum_{i = 0}^n\left(\ds\frac{\gamma}{r R_k(f_0)^\frac{1}{k}}\right)^i\\
&\leq& cR_k(f_0)^\frac{n}{k}\frac{1}{{r}^\frac{n}{k}}\frac{1}{1-\frac{\gamma}{r R_k(f_0)^\frac{1}{k}}}.
\end{array}
\]
So the Lebesgue measure of $\Delta(Df_0, \gamma, \overline{\mathbf{B}}^n) \cap \mathbf{B}_\varepsilon^n$,
\[\begin{array}{rcl}
 m(\Delta(Df_0, \gamma, \overline{\mathbf{B}}^n) \cap \mathbf{B}_\varepsilon^n)
&\leq &r^nm(\overline{\mathbf{B}}^n)M(r,\Delta(Df_0, \gamma, \overline{\mathbf{B}}^n) \cap \mathbf{B}_\varepsilon^n)\\
&\leq& r^nm(\overline{\mathbf{B}}^n)cR_k(f_0)^\frac{n}{k}\frac{1}{{r}^\frac{n}{k}}\frac{1}{1-\frac{\gamma}{r R_k(f_0)^\frac{1}{k}}}.
\end{array}
\]
Let 
$$r(\varepsilon)=\frac 12\min( \varepsilon,( \frac{\varepsilon}{c^\frac{1}{n}R_k(f_0)^\frac{1}{k}})^\frac{k}{k-1}),$$
and
$$\gamma(K, 2\varepsilon)= R_k(f_0)^\frac{1}{k}r(\varepsilon)(1-\frac{r(\varepsilon)^ncR_k(f_0)^\frac{n}{k}}{\varepsilon^nr(\varepsilon)^\frac{n}{k}})>0,$$
with $R_k(f_0) = \frac{K}{(k-1)!}$. We get
 \[ m(\Delta(Df_0, \gamma, \overline{\mathbf{B}}^n) \cap \mathbf{B}_\varepsilon^n)
< \varepsilon^nm( \overline{\mathbf{B}}^n)=m( \mathbf{B}_\varepsilon^n).\]
 So we can choose a $\gamma(K, 2\varepsilon) $-regular value $v$ of $Df_0$, with $\|v\|<\varepsilon$.

Now, let $h: \overline{\mathbf{B}}^n \rightarrow \R$ be a linear mapping with $Dh = -v$ and $f=f_0+h$. Then
$\|h\|_{C^k} \leq \varepsilon$, $Df=Df_0-v$, and $Hf=Hf_0=D(Df_0)$. So
$0$ is a $\gamma(K, 2\varepsilon)$-regular value of $Df$, and  at each critical point $x_i $ of $f$, we have

\begin{equation}\label{ct241}
\|Hf(x_i)\| \geq \gamma(K, 2\varepsilon).
\end{equation}
By Remark \ref{nx221}, the smallest absolute value of the eigenvalues of the Hessian $Hf(x_i)$ is at least $\psi_1(K, 2\varepsilon) = \gamma(K, 2\varepsilon).$

\vspace{8pt}\noindent
($ii$) Consider $Df: \overline{\mathbf{B}}^n \rightarrow \R^n$. Suppose that $x_i$ is a critical point of  $f$. Then applying (\ref{ct241}) we obtain
\[\delta' = \frac{1}{2}\frac{1}{\|Hf(x_i)^{-1}\|}\geq \frac{1}{2}\gamma(K, 2\varepsilon).\]
Choose
$\delta' = \frac{1}{2}\gamma(K, 2\varepsilon)$,
we have
\[\|D(Df)(x) - D(Df)(x_i)\| = \|D(Df_0)(x) - D(Df_0)(x_i)\| \leq K \|x -x_i\|,\]
hence, if $\ds \|x -x_i\| \leq \frac{\delta'}{K}$, we get
$$D(Df)(x) \in D(Df)(x_i) + \delta' \mathcal{B}_{n\times n}.$$
Therefore, if $\ds r = \frac{\delta'}{K}$, then every $x \in \mathbf{B}_r^n(x_i)$ we obtain
\[D(Df)(x) \in D(Df)(x_i) + \delta' \mathcal{B}_{n\times n}.\]
Thus, applying Theorem \ref{dl2}, $Df$ is invertible in
$$\mathbf{B}_{\frac{r\delta'}{2K}}^n\left( x_i\right)  = \mathbf{B}_{\frac{\gamma^2(K, \varepsilon)}{8K^2}}^n\left( x_i \right).$$
Hence, $Df^{-1}(0)$ is unique in $\ds \mathbf{B}_{\frac{\gamma^2(K, \varepsilon)}{8K^2}}^n\left( x_i \right)$, i.e. $x_i$ is the unique critical point of $f$ in the ball  $\ds\mathbf{B}_{\frac{\gamma^2(K, \varepsilon)}{8K^2}}^n\left( x_i \right)$.\\
So if $x_i, x_j$ are different critical points of $f$, we have
\[d(x_i, x_j) \geq d(K, 2\varepsilon) = \frac{1}{4}\frac{\gamma^2(K, 2\varepsilon)}{K^2} > 0.\]
Therefore, the number of critical points $x_i$ does not exceed \\
$\textrm{ }\ \ \ \ \ \ \ \ \ \ \ \ \ \ \ \ \ \ \ \ \ \ \  \ \ \ \ \ \ \ N(K, 2\varepsilon) =$ $ M\left(\frac{1}{4}\frac{\gamma^2(K, 2\varepsilon)}{K^2}, \overline{\mathbf{B}}^n\right)$.

\vspace{8pt}\noindent
($iii$) Suppose that the number of critical points of $f$ being $N, N \leq N(K, 2\varepsilon)$, and critical values of $f$ ordered as follows:
$$f(x_1) \leq f(x_2) \leq \ldots \leq f(x_{N}).$$
For each critical point $x_i$ of $f$, set
\[U_i = \mathbf{B}_{\frac{d(K, 2\varepsilon)}{2}}^n(x_i) \cap \overline{\mathbf{B}}^n, \ \ \ B_i=\mathbf{B}_{\frac{d(K, 2\varepsilon)}{4}}^n(x_i) \cap \overline{\mathbf{B}}^n.\]
We call $\lambda_i: \overline{\mathbf{B}}^n \rightarrow [0, 1]$ the mapping of class $C^\infty$,
where
\[\lambda_i(x) = \left\lbrace \begin{array}{rl}
0, &x \notin U_i\\
1,&x \in B_i
\end{array}
\right.\]
with all derivatives uniformly bounded by $C_1$. Set ~~$\widetilde{f} = f + \lambda$, with
\[\lambda: \overline{\mathbf{B}}^n \rightarrow \R,~~~ \ds \lambda(x) = \sum_{i=1}^{N}c_i\lambda_i(x), \ \textrm{where} \ c_i = i \cdot \frac{\varepsilon}{2C_1kN^2} > 0.\]
From $(ii)$ we obtain every $U_i$ disjoint, and we have
$\|\lambda\|_{C^k} \leq \frac{\varepsilon}{2}.$\\
Thus $\widetilde{f}$ will be a Morse function having the same critical points as $f$ and these will have the same indices. Moreover,
$\widetilde{f}(x_i) = f(x_i) + c_i$. Hence, with $x_i, x_j$ are critical points, $i \neq j$, we obtain
\begin{equation}
\label{ct242}
|\widetilde{f}(x_i) - \widetilde{f}(x_j)| = |f(x_i) + c_i - f(x_j) - c_j| \geq \frac{\varepsilon}{2kC_1N^2} > 0.
\end{equation}
Therefore, replacing the linear mapping $h$ in  ($i$)  by
$$h = h_1 + \lambda,$$ with $h_1: \overline{\mathbf{B}}^n \rightarrow \R$ being a linear mapping such that $Dh_1 = -v$, and $v$ is a $\gamma(K, \varepsilon)$-regular value of $Df_0$, at a distance of most $\frac{\varepsilon}{2}$  from $0$, we get
$$\ds \|h\|_{C^k} = \|h_1 + \lambda\|_{C^k} \leq \varepsilon,$$
and $f = f_0 + h = f_0 + h_1 + \lambda$ to satisfy $(i)$ and $(ii)$, with
\[\ds\psi_1(K, \varepsilon) = \gamma(K, \varepsilon); \
\ds d(K, \varepsilon) = \frac{1}{4}\frac{\gamma^2(K, \varepsilon)}{K^2}, \ N(K, \varepsilon) = M\left(\frac{1}{4}\frac{\gamma^2(K, \varepsilon)}{K^2}, \overline{\mathbf{B}}^n\right).\]
Moreover, by (\ref{ct242}), for any $i \neq j$, we have
\[|f(x_i)- f(x_j)|\geq\psi_2(K, \varepsilon) = \frac{\varepsilon}{2kC_1N^2(K, \varepsilon)} > 0.\]

\vspace*{8pt} \noindent
($iv$) According to ($ii$),  we only need to prove ($iv$) for each critical point $x_i$. Moreover, we may assume $x_i=0, f(x_i)=0$.\\
Let $Q_0 \in \textrm{Gl}(n)$ be a linear transformation satisfying the condition of Remark \ref{nx224} such that
\[^tQ_0Hf(0)Q_0 = D_0.\]
The coordinate transformation $\varphi$ is constructed  as follows.\\
First, let
$\mathcal{B}: \overline{\mathbf{B}}^n \rightarrow \textrm{Sym}(n) \in C^{k-1}$ in a open set contain $\overline{\mathbf{B}}^n$, be defined by
\[\mathcal{B}(x) = B_x= (b_{ij}(x))_{1 \leq i, j \leq n},\]
where
\[b_{ij}(x) = \int_0^1 \int_0^1\frac{\partial^2 f}{\partial x_j \partial x_i}(stx)ds dt, 1\leq i, j \leq n.\]
Then
$$f(x) = \ds\sum_{i, j =1}^nb_{ij}(x)x_i x_j ~~~\textrm{and}~~\mathcal{B}(0) = A = Hf(0).$$
Applying Lemma \ref{md241},
we get
\[\mathcal{P}: U(A) \rightarrow \textrm{Gl}(n),\]
being of class $C^\omega$ such that $\mathcal{P}(A) = Q_0$, and if $\mathcal{P}(B) = Q$ then $^tQBQ = D_0$.\\
According to the Mean Value Theorem and Remark \ref{nx224}, the condition to apply Lemma \ref{md241} is
\[\|Hf(x)-A\| \leq (K + \varepsilon)\|x\| \leq \frac{1}{2n\|Q_0\|^2} = \frac{1}{2n}\sigma_{\min}(A),\]
or
\[\|x\| \leq \frac{1}{(K+\varepsilon)2n\|Q_0\|^2}  = \frac{1}{(K+\varepsilon)2n}\sigma_{\min}(A).\]
Set $\delta = \psi_3(K, \varepsilon) = \frac{1}{(K+\varepsilon)2n}\gamma(K,\varepsilon)$ and
\[\varphi: U_{\delta}(0) \rightarrow \R^n, \ \ \ y = \varphi(x) = Q_x^{-1}x, \ \ \ \textrm{with} \ Q_x = \mathcal{P}(B_x).\]
We have
\[f(x) ={^tx}B_xx={^ty}(^tQ_xB_xQ_x)y={^ty}D_0y = y_1^2 + \cdots + y_l^2 - y_{l+1}^2 - \cdots -
y_n^2.\]
To prove $\|\varphi\|_{C^{k-1}}\leq M(K, \varepsilon)$, present $\varphi$ as the following composition
\[\varphi: x \in U_\delta(0) \xrightarrow{\mathcal{B}} B_x \xrightarrow{\mathcal{P}} Q_x \xrightarrow{\textrm{Inv}} Q_x^{-1} \xrightarrow{L} \varphi (x) = Q_x^{-1}x.\]
By the construction $\mathcal{B}\in C^{k-1}$, and
by the assumption, the partial derivatives of $\mathcal{B}$
\[\|\partial^\alpha \mathcal{B}(x)\| \leq K, \text{for all } \alpha\in \N^n, |\alpha| \leq k -1.\]
Since $U(A)$ is compact, there exists $M_1(K, \varepsilon) >0$ such that
\[\|\partial^\alpha \mathcal{P}(B)\| \leq M_1(K, \varepsilon), \ \textrm{for all} \ B \in U(A), |\alpha| \leq k-1.\]
Similarly, since $\mathcal{P}(U(A))$ is compact, there exists $C_2(K, \varepsilon)>0$ such that
\[\|\partial^\alpha \textrm{Inv}(Q)\| \leq C_2(K, \varepsilon), |\alpha| \leq k-1, \ \textrm{for all}\ Q \in \mathcal{P}(U(A)).\]
Let
\[\overline{L}: U_\delta \times \textrm{Inv}(\mathcal{P}(U(A))) \rightarrow \R^n, \overline{L}(x, Q') = Q'x.\]
Then $\overline{L}$ is a bilinear form. Hence there exists $C_3(K, \varepsilon)>0$ such that
\[\|\partial L\| \leq C_3(K, \varepsilon),\ \|\partial^\alpha L\| = 0, \ \textrm{for} \ |\alpha| \geq 2, \ \textrm{and} \ (x, Q') \in U_\delta \times \textrm{Inv}(\mathcal{P}(U(A))).\]
Since $\partial^\alpha \varphi$ can be represented as a sum of products of $\partial^{\alpha_1}\mathcal{B}, \partial^{\alpha_2}\mathcal{P}, \partial^{\alpha_3}\textrm{Inv}$ and $\partial^{\alpha_4}L$, with $|\alpha_j|\leq |\alpha|, j = 1, \ldots, 4$, there exits $M(K, \varepsilon)>0$ depending on $K$, $M_1(K, \varepsilon)$, $C_2(K, \varepsilon)$ and $C_3(K, \varepsilon)$ such that $\|\varphi\|_{C^{k-1}} \leq M(K, \varepsilon)$.

\vspace*{8pt}\noindent
($v$)
Consider $Df: \overline{\mathbf{B}}^n \rightarrow \R^n$. Then for $x_i$ is critical point of $f$, we have
\[Df(x_i) = 0, \|Df(x)\| = \|Df(x) - Df(x_i)\| ~~\textrm{for all} ~x \in \overline{\mathbf{B}}^n,\]
moreover
\[\sigma = \frac{1}{2}\frac{1}{\|Hf(x_i)^{-1}\|} \geq \frac{1}{2}\gamma(K, \varepsilon).\]
We have $$\|D(Df)(x) - D(Df)(x_i)\| \leq (K + \varepsilon)\|x - x_i\|,$$ hence with $\|x-x_i\| \leq \frac{\sigma}{K+\varepsilon}$, and $\sigma = \frac{1}{2}\gamma(K, \varepsilon)$, we obtain $$D(Df)(x) \in D(Df)(x_i) + \sigma \mathcal{B}_{n\times n}.$$
Therefore, if $r = \min(\frac{\sigma}{K+\varepsilon}, \frac{1}{\sigma n}\gamma(K, \varepsilon))$, then
\[D(Df)(x) \in D(Df)(x_i) + \sigma \mathcal{B}_{n\times n}, \ \textrm{for all} \ x \in \mathbf{B}_r^n(x_i).\]
Hence, applying Theorem \ref{dl2}, there exist neighborhoods $U$ and $V$ of $x_i$ and $Df(x_i)$, respectively, such that $Df$ is invertible, with
\[U = \mathbf{B}_{\frac{r\sigma}{2(K+ \varepsilon)}}^n\left( x_i\right) , \ \ V=\mathbf{B}_{\frac{r\sigma}{2}}^n\left( 0\right).\]
So, with $\ds\eta(K, \varepsilon) = \frac{r\sigma}{2}=\frac{1}{4}r\gamma(K, \varepsilon)$, as $\|\textrm{grad} f(x)\| \leq \eta(K, \varepsilon)$ we have
\[x \in \mathbf{B}_{\frac{r\sigma}{2(K+ \varepsilon)}}^n\left( x_i\right)  \subset \mathbf{B}_{\psi_3(K, \varepsilon)}^n\left( x_i\right).\]
\end{proof}

{\bf Received: September, 2011}

\end{document}